\newtheorem{theorem}{Theorem}[section]
\newtheorem{lemma}[theorem]{Lemma}
\newproof{proof}{Proof}
\newdefinition{definition}{Definition}
\newdefinition{remark}{Remark}
\begin{document}

\begin{frontmatter}

\title{Sign changes in Fourier coefficients of the symmetric power $L$-functions on sums of two squares}

\author{Amrinder Kaur} 

\affiliation{organization={Harish-Chandra Research Institute},
            addressline={Chhatnag Road, Jhunsi}, 
            city={Prayagraj},
            postcode={211019}, 
            state={Uttar Pradesh},
            country={India}}

\begin{abstract}
Let $f$ be a normalized primitive Hecke eigen cusp form of even integral weight $k$ for the full modular group $SL(2,\mathbb{Z})$. For integers $j \geq 2$, let $\lambda_{sym^j f}(m)$ denote the $m$th Fourier coefficient of the $j$th symmetric power $L$-function associated with $f$. We give a quantitative result on the number of sign changes of $\lambda_{sym^j f}(m)$ for the indices $m$ that are the sum of two squares in the interval $[1,x]$ for sufficiently large $x$.
\end{abstract}

\begin{keyword}
Modular forms \sep Sign change \sep Fourier coefficients \sep Symmetric Power $L$-function

\MSC[2020] 11F11 \sep 11F30 \sep 11F41 \sep 11N37
\end{keyword}

\end{frontmatter}

\section{Introduction}

The study of sign changes in Fourier coefficients has long been of interest to number theorists. Given a function $h: \mathbb{N} \to \mathbb{R}$, we say that it has $l$ sign changes in $[1,x]$ if there exist integers $1 \leq m_1 \leq m_2 \leq \cdots \leq m_{l+1} \leq x$ such that $h(m_i) \neq 0$ for all $1 \leq i \leq l+1$ and $h(m_i)h(m_{i+1})<0$ for all $1\leq i \leq l$. Matom\"aki and Radziwi\l\l \cite[Theorem 1.2]{KmMr1} showed that there is a positive proportion of sign changes in the Fourier coefficients of modular forms and Maass forms for $SL(2,\mathbb{Z})$. 
In a subsequent work \cite[Corollary 3]{KmMr2}, they established a broader result: any real multiplicative function has a positive proportion of sign changes if and only if it attains a negative value at some integer and is non-zero for a positive proportion of integers. This implies that the Fourier coefficients of the symmetric power $L$-function associated to modular forms exhibit a positive proportion of sign changes. Some significant results regarding the sign changes of Fourier coefficients of higher degree $L$-functions have been established by Meher and Murty \cite[Theorem 6.1]{JmMrm2}, J{\"a}{\"a}saari \cite[Theorem 1]{Jj} and Kim \cite[Corollary 1.11]{Jk}.

 The study of sign changes in subsequences of Fourier coefficients has also been explored in various contexts. In the case of sparse subsequences, Matom\"aki and Radziwi\l\l's result \cite[Corollary 3]{KmMr2} is inapplicable, though one still expects that the number of sign changes is of the same order of magnitude as the number of nonzero values. Banerjee and Pandey \cite[Theorem 2.1]{SbMkp} studied the sign changes of Fourier coefficients of modular forms on the sum of two squares. Lowry-Duda later improved their result in his preprint \cite[Theorem 1]{Dld}. The author, in collaboration with Saha \cite{AkBs}, examined the sign changes of Fourier coefficients of $SL(2,\mathbb{Z})$ Maass forms on the sum of two squares. Motivated by these works, we consider studying the sign changes of Fourier coefficients of higher degree $L$-functions in a subsequence. 

Let $f$ be a normalized primitive Hecke eigen cusp form of even integral weight $k$ for the full modular group $SL(2,\mathbb{Z})$ and $H_k$ be the set of all such forms. Then $f$ admits the Fourier expansion at the cusp $\infty$:
$$ f(z) = \sum_{m=1}^{\infty} \lambda_f(m) m^{\frac{k-1}{2}} e^{2 \pi i mz} ,$$
where $\lambda_f(m)$ is the $m$th Fourier coefficient and the Hecke eigenvalue. This Fourier coefficient is normalized so that $\lambda_f(1)=1$. Let $\lambda_{sym^j f}(m)$ denote the $m$th Fourier coefficient of the $j$th symmetric power $L$-function associated with $f$.

Utilizing the axiomatization introduced by Meher and Murty \cite[Theorem 1.1]{JmMrm1}, we aim to study the sign changes for the Fourier coefficients of the symmetric power $L$-function $L_{\text{sym}^j f}(s)$ on the sum of two squares. More precisely, we analyze the sign changes in the sequence
$$\{ \lambda_{\text{sym}^j f}(m) \mid m \leq x , m = c^2+d^2, (c,d) \in \mathbb{Z}^2 \}$$ 
for sufficiently large $x>0$. In particular, we show the following result.

\begin{theorem} \label{t1}
Let $f \in H_k$. For any integer $j \geq 2$ and sufficiently large $x$, the sequence
$$ \{ \lambda_{\text{sym}^j f}(m) \mid m \leq x , m = c^2+d^2, (c,d) \in \mathbb{Z}^2 \} $$
has at least $x^{1-\delta_j}$ sign changes for any $\delta_j$ with 
$$ \frac{21j^2+42j+19}{21j^2+42j+40} < \delta_j <1. $$
\end{theorem}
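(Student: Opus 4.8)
The plan is to count sign changes through the classical principle that a real sequence whose partial sums remain small while its total $\ell^1$-mass is large must change sign often, and then to supply the two resulting estimates from the analytic theory of the symmetric power $L$-functions. Throughout write $B=\{n\ge 1: n=c^2+d^2\}$ and $a(n)=\lambda_{\mathrm{sym}^j f}(n)$, which is real since $f$ has level one. Set $T(t)=\sum_{n\le t,\, n\in B}a(n)$ and $M=\max_{t\le x}|T(t)|$. If the sequence $(a(n))_{n\in B,\, n\le x}$ has $R$ sign changes in $[1,x]$, then between consecutive sign changes $T$ is monotone, so $T$ is piecewise monotone with at most $R+1$ pieces, each of whose values lies in $[-M,M]$. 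Hence the total variation of $T$, which equals $\sum_{n\le x,\, n\in B}|a(n)|$, is at most $2M(R+1)$, giving
\[
R+1\ \ge\ \frac{1}{2M}\sum_{\substack{n\le x\\ n\in B}}\bigl|\lambda_{\mathrm{sym}^j f}(n)\bigr|.
\]
It therefore suffices to bound the numerator from below and $M$ from above.

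For the lower bound I would first reduce to a second moment: by Deligne's bound $\max_{n\le x}|\lambda_{\mathrm{sym}^j f}(n)|\ll_\varepsilon x^\varepsilon$, so that
\[
\sum_{\substack{n\le x\\ n\in B}}\bigl|\lambda_{\mathrm{sym}^j f}(n)\bigr|\ \gg_\varepsilon\ x^{-\varepsilon}\sum_{\substack{n\le x\\ n\in B}}\lambda_{\mathrm{sym}^j f}(n)^2 .
\]
The quantity on the right I would evaluate by a Selberg--Delange analysis of the Dirichlet series $\sum_{n\in B}\lambda_{\mathrm{sym}^j f}(n)^2\, n^{-s}$, whose Euler product carries, near $s=1$, the simple pole of the Rankin--Selberg $L$-function $L(s,\mathrm{sym}^j f\times\mathrm{sym}^j f)$ modified by the $(s-1)^{-1/2}$ singularity produced when the multiplicative weight is restricted to $B$. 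Because the summands are non-negative and the Rankin--Selberg $L$-function has a genuine pole at $s=1$, the leading constant is strictly positive, yielding $\sum_{n\le x,\, n\in B}\lambda_{\mathrm{sym}^j f}(n)^2\asymp x/\sqrt{\log x}$, so the mass is $\gg x^{1-\varepsilon}$.

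The crux, and the main obstacle, is the cancellation estimate
\[
M\ =\ \max_{t\le x}\Bigl|\sum_{\substack{n\le t\\ n\in B}}\lambda_{\mathrm{sym}^j f}(n)\Bigr|\ \ll_\varepsilon\ x^{\theta_j+\varepsilon},\qquad \theta_j=\frac{21j^2+42j+19}{21j^2+42j+40},
\]
which I would extract from the Meher--Murty axiomatization \cite[Theorem 1.1]{JmMrm1} applied to $\mathrm{sym}^j f$. Here one detects the condition $n\in B$ through the nonprincipal character $\chi$ modulo $4$ and expresses the generating Dirichlet series, up to a factor holomorphic in a suitable half-plane, in terms of $L_{\mathrm{sym}^j f}(s)$, its twist $L_{\mathrm{sym}^j f\otimes\chi}(s)$, and the Rankin--Selberg $L$-function governing the variance; for $j\ge 2$ these symmetric power $L$-functions are entire and non-vanishing at $s=1$ (so the series is regular there and the contour in Perron's formula may be moved across $\Re(s)=1$). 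Inserting the available convexity (Phragm\'en--Lindel\"of) bounds for the relevant $L$-functions in the critical strip and balancing the truncation height against the horizontal shift produces the exponent $\theta_j$; the quadratic dependence on $j$, with $21j^2+42j=21(j+1)^2-21$, reflects that the effective conductor in the estimate grows like that of a degree-$(j+1)^2$ $L$-function, so the saving $1-\theta_j=21/\bigl(21(j+1)^2+19\bigr)$ is of order $(j+1)^{-2}$. Verifying the hypotheses of the axiomatization for $\mathrm{sym}^j f$ (cuspidality, Ramanujan on average, the functional equation) is the key technical input.

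Combining the three inputs gives
\[
R\ \gg\ \frac{x^{1-\varepsilon}}{x^{\theta_j+\varepsilon}}\ =\ x^{1-\theta_j-2\varepsilon}\ \ge\ x^{1-\delta_j}
\]
for every $\delta_j\in(\theta_j,1)$, once $\varepsilon$ is chosen with $2\varepsilon<\delta_j-\theta_j$ and $x$ is large; this is precisely the asserted range. The two steps demanding the most care are the unconditional regularity and growth control, to the left of $\Re(s)=1$, of the factor attached to the sum-of-two-squares weight (needed to license the contour shift and to feed the convexity bounds), and the verification that the Selberg--Delange leading constant in the mass estimate is genuinely positive rather than accidentally zero.
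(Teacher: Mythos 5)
Your counting skeleton is sound: the total-variation inequality $\sum_{n\le x,\,n\in B}|a(n)|\le 2M(R+1)$ is a correct (and genuinely different from the paper's) way to convert an $\ell^1$-mass lower bound plus a cancellation bound into a sign-change count, and your mass lower bound is essentially fine (it even follows cheaply from the paper's Lemma \ref{l9} together with $r_2(n)\ll n^{\epsilon}$ and Deligne's bound). The proof collapses, however, at the cancellation estimate $M\ll x^{\theta_j+\epsilon}$, for three reasons. First, this cannot be ``extracted from the Meher--Murty axiomatization'': their Theorem 1.1 \emph{consumes} pointwise, first-moment and second-moment bounds as hypotheses and outputs sign changes; it produces no bound on partial sums. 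Second, you have misassigned the role of the exponent $\theta_j=\frac{21j^2+42j+19}{21j^2+42j+40}$: in the paper this is $\gamma_j$, the \emph{error exponent in the second-moment asymptotic} $\sum_{m\le x}\lambda_{sym^j f}^2(m)r_2(m)=C_jx+O(x^{\gamma_j})$ (Lemma \ref{l9}, proved by Perron's formula applied to $F(s)=G(s)H(s)$ with $G(s)=\prod_{i=0}^{j}L_{sym^{2i}f}(s)L_{sym^{2i}f\otimes\chi}(s)$, fed by the subconvexity bounds of Lemmas \ref{l1}--\ref{l5}); the first-moment bound actually available (Lemma \ref{l7}, Tang--Wu) has the much smaller exponent $\frac{j}{j+2}$. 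Third, and fatally, your partial sums carry the \emph{indicator} of $B$ rather than the weight $r_2(n)$. The Dirichlet series $\sum_{n\in B}\lambda_{sym^j f}(n)n^{-s}$ factors, up to a factor holomorphic in $\Re(s)>\tfrac12$, as $\bigl(L_{sym^j f}(s)L_{sym^j f\otimes\chi}(s)\bigr)^{1/2}$, and this square root has branch points at every zero of these entire $L$-functions inside the critical strip. Non-vanishing at $s=1$ does not license moving a Perron contour across $\Re(s)=1$; you would need a zero-free half-plane $\Re(s)>\sigma_0$ with $\sigma_0<1$, i.e.\ a quasi-Riemann hypothesis. No unconditional power-saving bound for the indicator-weighted first moment is known, so the key input to your argument is out of reach.

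The repair is exactly the device the paper uses: since $r_2(n)\ge 0$ and $r_2(n)>0$ precisely when $n\in B$, sign changes of $\lambda_{sym^j f}(n)$ along $B$ coincide with sign changes of $\lambda_{sym^j f}(n)r_2(n)$ along all $n$, and $r_2/4=1*\chi$ is a genuine Dirichlet convolution, so every generating series involves only \emph{integer} powers of $L$-functions. With this weighting, Tang--Wu (Lemma \ref{l7}) gives $\max_{t\le x}\bigl|\sum_{n\le t}\lambda_{sym^j f}(n)r_2(n)\bigr|\ll x^{\frac{j}{j+2}+\epsilon}$, the mass bound $\sum_{n\le x}|\lambda_{sym^j f}(n)|r_2(n)\gg x^{1-\epsilon}$ follows from Lemma \ref{l9}, and your own total-variation skeleton then yields $\gg x^{\frac{2}{j+2}-\epsilon}$ sign changes, which implies the stated theorem since $\frac{j}{j+2}<\gamma_j<\delta_j$. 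The paper instead feeds its three estimates ($\alpha_j=\epsilon$, $\beta_j=\frac{j}{j+2}+\epsilon$, $\gamma_j$) into Meher--Murty's criterion with $\delta_j>\max\{\alpha_j+\beta_j,\gamma_j\}=\gamma_j$; that route is what ties the theorem's exponent to the second moment, and it has the additional benefit of locating a sign change in every short interval $(y,y+y^{\delta_j}]$ rather than only counting them globally.
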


\begin{remark}
\normalfont{
The result mentioned in Theorem \ref{t1} holds for the Fourier coefficients of the $j$th symmetric power $L$-function associated with Maass forms on $SL(2,\mathbb{Z})$ under the assumption of Ramanujan conjecture and the automorphy of $L_{sym^i f}(s)$ for $1 \leq i \leq 2j$.
}
\end{remark}

\section{Background and Notations}

We begin by recalling some key definitions in this section which are useful in understanding the behaviour of Fourier coefficients. 

\begin{definition} \cite[Section 14.5]{HiEk}
The Hecke $L$-function attached to $f$ is defined as
$$ L_f(s) = \sum_{m=1}^{\infty} \frac{\lambda_f(m)}{m^s}. $$
It is absolutely convergent on $\Re(s) > 1$ and in this region has the Euler product
$$ L_f(s) = \prod_p \left( 1-\frac{\alpha_f(p)}{p^s} \right)^{-1} \left( 1-\frac{\beta_f(p)}{p^s} \right)^{-1} .$$
The complex coefficients $\alpha_f(p)$ and $\beta_f(p)$ satisfy the conditions
$$ \alpha_f(p)+\beta_f(p) = \lambda_f(p) \ \text{and} \ \alpha_f(p) \beta_f(p) = \abs{\alpha_f(p)} = \abs{\beta_f(p)} = 1. $$
\end{definition}

Let 
$$ r_2(m) := \# \{ m= c^2+d^2 \mid (c,d) \in \mathbb{Z}^2 \} $$
denote the number of ways of writing $m$ as the sum of two squares. Then $r_2(m)$ is non-negative. 
The classical theta function is defined as 
$$ \theta(z) = \sum_{m \in \mathbb{Z}} e^{2 \pi i m^2 z} $$
which is a modular form of weight $\frac{1}{2}$ on $\Gamma_0(4)$. Then
$$ \theta^2(z) = 1+\sum_{m=1}^{\infty} r_2(m) e^{2 \pi i m z} $$
is a modular form of weight 1 on $\Gamma_0(4)$ with character $\chi$ being the primitive Dirichlet character modulo 4. Precisely,
$$ \chi(d) =
\begin{cases}
1 & d \equiv 1 \ (\text{mod} \ 4) \\
-1 & d \equiv 3 \ (\text{mod} \ 4) \\
0 & d \equiv 0,2 \ (\text{mod} \ 4) 
\end{cases}
$$
and
$$ r_2(m) := 4 r(m) \ \text{where} \ r(m):= \sum_{d|m} \chi(d). $$

\begin{definition} \cite[Section 3.2]{HiEk}
The Dirichlet $L$-function associated to $\chi$ is defined by
$$ L_{\chi}(s) = \sum_{m=1}^{\infty} \frac{\chi(m)}{m^s} $$
which is absolutely convergent on $\Re(s)>1$.
\end{definition}

\begin{definition} \cite[Section 13.8]{Hi}
For integers $j \geq 0$, the $j$-th symmetric power $L$-function attached to $f$ is defined as
\begin{align*}
L_{sym^j f}(s) &= \prod_p \prod_{m=0}^j \left( 1 - \frac{\alpha_f(p)^{j-m} \beta_f(p)^m}{p^s} \right)^{-1} \\
&:= \sum_{m=1}^{\infty} \frac{\lambda_{sym^j f}(m)}{m^s}
\end{align*}
and it is absolutely convergent on $\Re(s)>1$.
\end{definition}

Note that 
$$ L_{sym^0 f}(s) = \zeta(s) \ \text{and} \ L_{sym^1 f}(s) = L_f(s). $$

\begin{definition}
The twisted $j$-th symmetric power $L$-function can be defined as
$$ L_{sym^j f \otimes \chi}(s) = \sum_{m=1}^{\infty} \frac{\lambda_{sym^j f}(m) \chi(m)}{m^s} $$
which is absolutely convergent for $\Re(s)>1$ and has the Euler product
$$ L_{sym^j f \otimes \chi}(s) = \prod_p \prod_{m=0}^j \left( 1- \frac{\alpha_f(p)^{j-m} \beta_f(p)^m \chi(p)}{p^s} \right)^{-1}. $$
\end{definition}

\section{Auxiliary results}

In this section, we reproduce some subconvexity results and investigate the asymptotics for partial sums $\sum_{m \leq x} \lambda_{sym^j f}(m) r_2(m)$ and $\sum_{m \leq x} \lambda_{sym^j f}^2(m) r_2(m)$.

\begin{lemma}{\cite[Theorem 5]{Jb}} \label{l1}
For $\epsilon>0$, we have
$$ \zeta(\sigma+it) \ll (\, \abs{t}+1)^{\frac{13}{42}(1-\sigma)+\epsilon} $$ 
uniformly for $\frac{1}{2} \leq \sigma \leq 1$ and $\, \abs{t} \geq 1$.
\end{lemma}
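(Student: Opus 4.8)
The plan is to obtain the bound by interpolating between the two natural edges of the critical strip, $\sigma = \tfrac{1}{2}$ and $\sigma = 1$, via the Phragm\'en--Lindel\"of convexity principle. The guiding fact is that the Lindel\"of function $\mu(\sigma)$, defined as the infimum of exponents $c$ for which $\zeta(\sigma+it) \ll (\abs{t}+1)^{c+\epsilon}$, is convex in $\sigma$; consequently an upper bound established at the two endpoints propagates as a linear function of $\sigma$ across the whole strip. So the task reduces to pinning down the endpoint exponents and then checking that the resulting line has slope matching the stated $\frac{13}{42}(1-\sigma)$.

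First I would record the endpoint estimates. On the line $\sigma = 1$ the classical bound $\zeta(1+it) \ll \log(\abs{t}+1) \ll (\abs{t}+1)^{\epsilon}$ gives $\mu(1)=0$. On the critical line $\sigma = \tfrac{1}{2}$ I would invoke the subconvexity estimate $\zeta(\tfrac{1}{2}+it) \ll (\abs{t}+1)^{13/84+\epsilon}$, so that $\mu(\tfrac{1}{2}) \le \frac{13}{84}$. This critical-line bound is the genuine input and is exactly the content of Bourgain's exponential-sum theorem cited as \cite[Theorem 5]{Jb}. I would also note that $\zeta$ has finite (polynomial) order throughout the strip, which is what the convexity principle requires, and that restricting to $\abs{t} \ge 1$ keeps us away from the pole at $s=1$ and lets me absorb the difference between $\abs{t}$ and $\abs{t}+1$ into the implied constant.

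With these in place, the Phragm\'en--Lindel\"of principle applied to a suitable normalization of $\zeta$ on the strip $\tfrac{1}{2} \le \sigma \le 1$ yields $\mu(\sigma) \le L(\sigma)$, where $L$ is the straight line through $(\tfrac{1}{2},\tfrac{13}{84})$ and $(1,0)$. A direct computation gives
$$ L(\sigma) = \frac{\tfrac{13}{84}-0}{\tfrac{1}{2}-1}\,(\sigma - 1) = \frac{13}{42}(1-\sigma), $$
which is precisely the exponent appearing in the statement; the $\epsilon$-loss present at each endpoint carries through the interpolation to give the claimed $+\epsilon$.

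The main obstacle is the critical-line estimate at $\sigma = \tfrac{1}{2}$: the convexity interpolation and the endpoint bound at $\sigma=1$ are entirely routine, but the exponent $\frac{13}{84}$ rests on deep harmonic-analytic machinery (decoupling inequalities for the moment curve together with van der Corput--type bounds on the associated exponential sums). I would take this as the cited result of Bourgain rather than attempt to reproduce it, so that the work here is confined to the verification that its interpolated form gives exactly the stated uniform bound.
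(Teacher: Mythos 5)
Your proposal is correct and is essentially the argument the paper intends: Lemma~\ref{l1} is quoted directly from Bourgain's critical-line estimate $\zeta(\tfrac{1}{2}+it) \ll (\abs{t}+1)^{13/84+\epsilon}$ (\cite[Theorem 5]{Jb}), and the uniform strip bound with exponent $\frac{13}{42}(1-\sigma)$ is exactly the standard Phragm\'en--Lindel\"of interpolation you carry out between $\mu(\tfrac{1}{2}) \le \frac{13}{84}$ and $\mu(1)=0$. Your endpoint data and the line computation through $(\tfrac{1}{2},\tfrac{13}{84})$ and $(1,0)$ are correct, so nothing is missing.
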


\begin{lemma} \label{l2}
For any $\epsilon>0$, $\frac{1}{2} \leq \sigma \leq 1$ and $\, \abs{t} \geq 1$, we have
$$ L_{sym^2 f}(\sigma+it) \ll (\, \abs{t}+1)^{\frac{8}{7}(1-\sigma)+\epsilon}. $$
\end{lemma}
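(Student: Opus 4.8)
The plan is to deduce this bound from the Phragm\'en--Lindel\"of convexity principle applied to the vertical strip $\frac12 \le \sigma \le 1$. Since $f \in H_k$ is a cuspidal Hecke eigenform, $\mathrm{sym}^2 f$ is a cuspidal automorphic representation of $GL(3)$ by Gelbart--Jacquet, so $L_{sym^2 f}(s)$ extends to an entire function of finite order; this is precisely what is required for the convexity principle to apply on a closed strip. The strategy is therefore to control $L_{sym^2 f}(s)$ on the two bounding lines $\sigma = 1$ and $\sigma = \frac12$ and to interpolate linearly between the resulting exponents.

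On the line $\sigma = 1$ the Dirichlet series essentially converges, and standard estimates at the edge of the critical strip give $L_{sym^2 f}(1+it) \ll (\abs{t}+1)^{\epsilon}$. The decisive input is a subconvex bound on the critical line, the $\mathrm{sym}^2$-analogue of what Lemma \ref{l1} supplies for $\zeta$, namely
$$ L_{sym^2 f}\!\left(\tfrac12 + it\right) \ll (\abs{t}+1)^{\frac47 + \epsilon}, $$
which I would obtain either by quoting the corresponding $t$-aspect subconvexity result in the literature or, from first principles, via the approximate functional equation for this degree-$3$ $L$-function together with genuine cancellation in the resulting sums (moment or amplification methods). Granting these two boundary estimates, the convexity principle yields $L_{sym^2 f}(\sigma+it) \ll (\abs{t}+1)^{\ell(\sigma)+\epsilon}$, where $\ell$ is the linear function determined by $\ell(\tfrac12) = \tfrac47$ and $\ell(1) = 0$; a direct computation gives $\ell(\sigma) = \frac87(1-\sigma)$, which is exactly the claimed exponent.

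The interpolation itself is routine: once the two boundary bounds and the holomorphy/finite-order hypotheses are in place, Phragm\'en--Lindel\"of is immediate. The genuine difficulty is the critical-line estimate $L_{sym^2 f}(\tfrac12+it)\ll (\abs{t}+1)^{4/7+\epsilon}$, since the convexity exponent there is $\tfrac34 > \tfrac47$; breaking convexity for a $GL(3)$ object needs arithmetic input rather than a formal argument, and this is the step I expect to be the main obstacle. I would also verify the mild technical points that $L_{sym^2 f}(s)$ has no poles in $\frac12 \le \sigma \le 1$ and grows at most polynomially in $t$ uniformly on the strip, both of which follow from the automorphy of $\mathrm{sym}^2 f$.
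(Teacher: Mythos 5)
Your proposal is correct and is essentially the paper's proof: both rest on the Gelbart--Jacquet lift realizing $\mathrm{sym}^2 f$ as a self-dual cuspidal $GL(3)$ form and then on quoting the recent $t$-aspect subconvexity result for such $L$-functions due to Dasgupta, Leung and Young, whose Corollary 1.4 is already stated uniformly in the strip as $(\abs{t}+1)^{\frac{8}{7}(1-\sigma)+\epsilon}$, so your Phragm\'en--Lindel\"of interpolation from the critical-line exponent $\frac{4}{7}$ is just the unwound form of the same statement. The critical-line bound $(\abs{t}+1)^{\frac{4}{7}+\epsilon}$ that you rightly single out as the decisive obstacle is exactly their theorem; your first option (quote the literature) is what the paper does, while your alternative of deriving it ``from first principles'' via the approximate functional equation and moments/amplification is not realistic at the level of this paper.
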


\begin{proof}
By Gelbart--Jacquet lift \cite[Proposition 3.2]{SgHj}, the symmetric square of $f$ corresponds to a self-dual $SL(3,\mathbb{Z})$ Hecke--Maass form (see \cite[Definition 5.1.3]{Dg} for the definition of $SL(n,\mathbb{Z})$ Hecke--Maass form for $n \geq 2$ and \cite[Proposition 9.2.1]{Dg} for the definition of dual Hecke--Maass form). For self-dual $SL(3,\mathbb{Z})$ forms, the above subconvexity bound can be found in the preprint of Dasgupta, Leung and Young \cite[Corollary 1.4]{AdWhlMpy}.
\end{proof}

\begin{lemma} \label{l3}
For $j \geq 3$ and $\epsilon>0$, we have
$$ L_{sym^j f}(\sigma+it) \ll (\, \abs{t}+1)^{\frac{j+1}{2}+\epsilon} $$
which holds uniformly in $\frac{1}{2} \leq \sigma \leq 1$ and $\, \abs{t} \geq 1$.
\end{lemma}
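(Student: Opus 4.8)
The plan is to obtain the estimate as a convexity bound, derived from the functional equation of $L_{sym^j f}(s)$ together with the Phragm\'en--Lindel\"of principle. For $f \in H_k$ of level one, the symmetric power lift $sym^j f$ is cuspidal automorphic (by the results of Newton and Thorne); in particular, since level-one holomorphic newforms are never of CM type, $sym^j f$ is cuspidal for every $j \geq 3$, so $L_{sym^j f}(s)$ continues to an entire function of degree $j+1$ with trivial arithmetic conductor and a standard functional equation $\Lambda_{sym^j f}(s) = \varepsilon\,\Lambda_{sym^j f}(1-s)$, where $\Lambda_{sym^j f}(s) = \gamma_j(s)\,L_{sym^j f}(s)$ and $\gamma_j(s)$ is a product of Gamma factors of total archimedean degree $j+1$ whose shifts depend only on $k$ and $j$.

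First I would record the two boundary estimates needed to feed Phragm\'en--Lindel\"of. On the line $\Re(s) = 1+\epsilon$ the Euler product converges absolutely, whence $L_{sym^j f}(\sigma+it) \ll 1$. On the reflected line $\Re(s) = -\epsilon$ I would invoke the functional equation in the form $L_{sym^j f}(s) = \varepsilon\,\bigl(\gamma_j(1-s)/\gamma_j(s)\bigr)\,L_{sym^j f}(1-s)$ and estimate the archimedean ratio by Stirling's formula. Because the total archimedean degree is $j+1$, this ratio is $\ll (\abs{t}+1)^{(j+1)(\frac12-\sigma)}$, and since $1-\sigma>1$ on this line the shifted value $L_{sym^j f}(1-s)$ is $O(1)$; hence $L_{sym^j f}(-\epsilon+it) \ll (\abs{t}+1)^{\frac{j+1}{2}+\epsilon'}$.

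Next I would interpolate. Applying Phragm\'en--Lindel\"of to $L_{sym^j f}(s)$ across the strip $-\epsilon \leq \Re(s) \leq 1+\epsilon$ produces a bound whose exponent is linear in $\sigma$, vanishing at $\sigma = 1+\epsilon$ and equal to $\frac{j+1}{2}+\epsilon'$ at $\sigma = -\epsilon$. Restricting to $\frac12 \leq \sigma \leq 1$, this interpolated exponent is largest at $\sigma=\frac12$, where it equals $\frac{j+1}{4}+O(\epsilon)$; in particular it is at most $\frac{j+1}{2}$ throughout the range, so the stated uniform bound follows, with room to spare.

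The main obstacle I anticipate is the archimedean bookkeeping. One must pin down the Gamma factors $\gamma_j(s)$ precisely enough to justify the Stirling estimate uniformly as $j$ grows, and verify the hypotheses of Phragm\'en--Lindel\"of, namely that $L_{sym^j f}(s)$ is of finite order in the strip and free of poles. Both are consequences of the automorphy of the symmetric powers (the pole issue being exactly why cuspidality, guaranteed here by the non-CM property of level-one forms, is needed for $j \geq 3$); once these inputs are quoted the remaining steps are routine. The hypothesis $\abs{t} \geq 1$ is what keeps Stirling's asymptotics valid and removes any boundary complication near $t=0$.
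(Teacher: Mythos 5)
Your proof is correct, and it reaches the bound by doing explicitly what the paper merely cites: the paper's entire proof of this lemma is the observation that $L_{sym^j f}(s)$ falls into Perelli's axiomatic class of ``general $L$-functions'', for which a bound of this shape is known. Your argument --- absolute convergence on $\Re(s)=1+\epsilon$, the functional equation plus Stirling on $\Re(s)=-\epsilon$, then Phragm\'en--Lindel\"of interpolation across the strip --- is precisely the mechanism underlying that citation, so the substance is the same, but your write-up is self-contained and makes explicit the hypothesis the citation quietly requires: before Perelli's framework (or any convexity argument) can be applied, one must know that $L_{sym^j f}$ is entire of degree $j+1$ with a functional equation of the standard type, and that is exactly the Newton--Thorne automorphy theorem, which the paper invokes only later, in the proof of Lemma \ref{l4}. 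Your remark that level-one forms are never of CM type, so that $sym^j f$ is cuspidal and its $L$-function pole-free, is the right justification for that input. A further point in your favour: your interpolation actually delivers the exponent $\frac{j+1}{2}(1-\sigma)+O(\epsilon)$ on $\frac12\le\sigma\le1$ (about $\frac{j+1}{4}$ at $\sigma=\frac12$), i.e.\ the genuine convexity bound, of which the stated uniform exponent $\frac{j+1}{2}+\epsilon$ is a weakening; this $(1-\sigma)$-scaled form matches the shape of Lemmas \ref{l1} and \ref{l2} and is what the exponent bookkeeping in the proof of Lemma \ref{l9} appears to presuppose, so proving the stronger statement is, if anything, a better fit for the application.
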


\begin{proof}
In the framework of Perelli \cite{Ap}, $L_{sym^j f}(s)$ is a general $L$-function, and this bound holds for such general $L$-functions.
\end{proof}

\begin{lemma} \label{l4}
For $f \in H_k$, consider $L_{sym^j f}(s)$ for integers $j \geq 0$. Let $\chi$ be the non-trivial Dirichlet character modulo 4. For any $\epsilon>0$, $\frac{1}{2} \leq \sigma \leq 1$, $\, \abs{t} \geq 1$, if we have
$$ L_{sym^j f}(\sigma+it) \ll (\, \abs{t}+1)^{\alpha(1-\sigma)+\epsilon} $$
for some $\alpha \in \mathbb{R}^+$, then we have
$$ L_{sym^j f \otimes \chi}(\sigma+it) \ll (\, \abs{t}+1)^{\alpha(1-\sigma)+\epsilon}. $$
\end{lemma}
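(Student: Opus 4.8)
The plan is to deduce the estimate for the twist from the hypothesised estimate for $L_{sym^j f}$ by the Phragm\'en--Lindel\"of convexity principle, exploiting the observation that the hypothesised bound $(|t|+1)^{\alpha(1-\sigma)+\epsilon}$ is precisely the linear interpolation, in the exponent, between the value $(|t|+1)^{\epsilon}$ on the line $\sigma=1$ and the value $(|t|+1)^{\alpha/2+\epsilon}$ on the line $\sigma=\tfrac12$. Thus it suffices to establish these two edge bounds for $L_{sym^j f\otimes\chi}(s)$ and then interpolate across the strip $\tfrac12\le\sigma\le1$. To prepare the interpolation I would first record that, since $\chi$ is a primitive character of the fixed modulus $4$, coprime to the level $1$ of $f$, the series $L_{sym^j f\otimes\chi}(s)$ continues to an entire function (for $j\ge1$; for $j=0$ it is simply $L_\chi$) of the same degree $d=j+1$ as $L_{sym^j f}$, and satisfies a functional equation whose $\Gamma$-factors are identical and whose conductor is merely $4^{\,d}$ times that of $L_{sym^j f}$. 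Consequently the analytic conductor $\mathfrak q(\sigma+it)\asymp 4^{d}(|t|+1)^{d}$ of the twist has exactly the same $t$-aspect as that of the untwisted $L$-function, the modulus contributing only a fixed constant.

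The right edge is routine: for $\sigma>1$ the Dirichlet series converges absolutely and $\chi$ is bounded, so $L_{sym^j f\otimes\chi}(\sigma+it)=O(1)$, and a standard argument (the Euler product together with a classical zero-free region near $\sigma=1$, or partial summation combined with the convexity bound from the functional equation) upgrades this to $L_{sym^j f\otimes\chi}(1+it)\ll(|t|+1)^{\epsilon}$. The insertion of the bounded factor $\chi(m)$ changes nothing essential in that derivation.

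The heart of the matter, and the step I expect to be the main obstacle, is the left edge $L_{sym^j f\otimes\chi}(\tfrac12+it)\ll(|t|+1)^{\alpha/2+\epsilon}$, which must be transferred from the untwisted subconvex bound $L_{sym^j f}(\tfrac12+it)\ll(|t|+1)^{\alpha/2+\epsilon}$. The point I would stress is that a subconvexity estimate in the $t$-aspect is governed entirely by the $t$-dependence of the analytic conductor, which by the first paragraph is unaltered by the twist. Concretely, writing $e(x)=e^{2\pi i x}$ one has $\chi(m)=\tfrac{1}{2i}\bigl(e(m/4)-e(-m/4)\bigr)$, so that $L_{sym^j f\otimes\chi}(s)$ is a finite combination of the additive twists $\sum_m\lambda_{sym^j f}(m)\,e(\pm m/4)\,m^{-s}$. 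These additive twists of fixed denominator $4$ obey a Voronoi-type functional equation of the same degree and the same $t$-aspect as $L_{sym^j f}$ itself, so the approximate-functional-equation estimate that produces the untwisted bound at $\sigma=\tfrac12$ applies essentially verbatim, with the modulus $4$ affecting only the implied constant. Hence the same subconvex exponent $\alpha/2$ persists for the twist on the critical line.

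With both edge bounds in hand, applying Phragm\'en--Lindel\"of to the entire, finite-order function $L_{sym^j f\otimes\chi}(s)$ in the strip $\tfrac12\le\sigma\le1$ interpolates the exponents linearly and yields $L_{sym^j f\otimes\chi}(\sigma+it)\ll(|t|+1)^{\alpha(1-\sigma)+\epsilon}$ uniformly for $|t|\ge1$, as required. The only genuine difficulty is the robustness claim of the previous paragraph --- that the subconvexity mechanism underlying the untwisted bound survives a fixed-modulus twist without loss in the $t$-exponent --- and I would make this precise by verifying that each step of that mechanism depends on the modulus only through the bounded conductor factor $4^{d}$.
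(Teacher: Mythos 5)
Your proposal follows essentially the same route as the paper's proof: analytic continuation and a functional equation for $L_{sym^j f \otimes \chi}(s)$ (via the automorphy of $L_{sym^j f}$), an approximate functional equation reducing matters to the critical line, transfer of the untwisted subconvex bound there, and Phragm\'en--Lindel\"of interpolation across the strip. Your additive-character decomposition $\chi(m)=\tfrac{1}{2i}\bigl(e(m/4)-e(-m/4)\bigr)$ simply makes explicit the transfer step that the paper states as ``these partial sums can be bounded by the partial sum of $L_{sym^j f}(\tfrac{1}{2}+it)$,'' and both arguments share the same feature (which you candidly flag) that the critical-line step really requires re-running the subconvexity mechanism for the fixed-modulus twist rather than invoking the hypothesised bound as a black box.
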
 

\begin{proof}
The breakthrough work of Newton and Thorne \cite{JnJat1, JnJat2} gives the automorphy of $L_{sym^j f}(s)$ for all $j \geq 1$. This implies that for $j \geq 1$, $L_{sym^j f}(s)$ can be analytically continued as an entire function and satisfies a functional equation of the Riemann zeta type. Hence, for $j \geq 0$ and $\chi$ a non-trivial Dirichlet character modulo 4, the twisted $L$-function $L_{sym^j f \otimes \chi}(s)$ has an analytic continuation to the entire complex plane. 

For a primitive character $\psi$, $L_{sym^j f \otimes \psi}(s)$ admits a functional equation relating $L_{sym^j f \otimes \psi}(s)$ to $L_{sym^j f \otimes \overline{\psi}}(1-s)$, where $\overline{\psi}$ is the complex conjugate of $\psi$. Since in our case, $\chi$ is a primitive real character, we get a functional equation relating $L_{sym^j f \otimes \chi}(s)$ to $L_{sym^j f \otimes \chi}(1-s)$.
 
If an $L$-function has an Euler product of degree $n \geq 1$ and functional equation of the Riemann zeta type, then it satisfies an approximate functional equation whose general form can be seen in \cite[Theorem 5.3]{HiEk}. For proving a subconvexity result, we start with an approximate functional equation whose main term is contributed by partial sums of $L_{sym^j f \otimes \chi}(\frac{1}{2}+it)$ with $\, \abs{t} \geq 1$. These partial sums can be bounded by the partial sum of $L_{sym^j f}(\frac{1}{2}+it)$. Using the subconvexity estimate for $L_{sym^j f}(\frac{1}{2}+it)$ along with the Phragm\'en--Lindel\"of principle, we obtain the same subconvexity bound in the $t$-aspect for $L_{sym^j f \otimes \chi}(\sigma+it)$ with $\sigma$ lying in the critical strip.
\end{proof}

Utilizing the above results, we get the following subconvexity bounds for the twisted $L$-functions.

\begin{lemma} \label{l5}
For integers $j \geq 3$ and $\epsilon>0$, we have the bounds
\begin{align*}
L_{\chi}(\sigma+it) &\ll (\, \abs{t}+1)^{\frac{13}{42}(1-\sigma)+\epsilon} ,\\
L_{sym^2 f \otimes \chi}(\sigma+it) &\ll (\, \abs{t}+1)^{\frac{8}{7}(1-\sigma)+\epsilon} ,\\
L_{sym^j f \otimes \chi}(\sigma+it) &\ll (\, \abs{t}+1)^{\frac{j+1}{2}+\epsilon} ,
\end{align*}
which hold uniformly in $\frac{1}{2} \leq \sigma \leq 1$ and $\, \abs{t} \geq 1$.
\end{lemma}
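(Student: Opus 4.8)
The plan is to derive each of the three estimates by transferring the bound for the relevant untwisted $L$-function (Lemmas \ref{l1}, \ref{l2}, \ref{l3}) to its twist by the fixed primitive character $\chi$ modulo $4$, using that such a twist leaves both the degree and the $t$-aspect conductor unchanged. Before doing so I would record two identifications. First, because $L_{sym^0 f}(s)=\zeta(s)$ has every Fourier coefficient equal to $1$, its twist is exactly $L_{\chi}(s)=L_{sym^0 f \otimes \chi}(s)$, so the first estimate is the $j=0$ case of a twisted symmetric-power bound. Second, by the automorphy of $L_{sym^j f}(s)$ (Newton--Thorne) already invoked in Lemma \ref{l4}, each $L_{sym^j f \otimes \chi}(s)$ is entire, is a degree-$(j+1)$ general $L$-function in Perelli's sense, and---since $\chi$ is real and primitive---obeys a Riemann-type functional equation relating its value at $s$ to its value at $1-s$.

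For the first two bounds I would simply invoke Lemma \ref{l4}. Feeding the bound of Lemma \ref{l1} (the case $\alpha=\tfrac{13}{42}$) into Lemma \ref{l4} with $j=0$ gives
$$ L_{\chi}(\sigma+it)=L_{sym^0 f \otimes \chi}(\sigma+it)\ll (\abs{t}+1)^{\frac{13}{42}(1-\sigma)+\epsilon}, $$
and feeding the bound of Lemma \ref{l2} (the case $\alpha=\tfrac{8}{7}$) into Lemma \ref{l4} with $j=2$ gives
$$ L_{sym^2 f \otimes \chi}(\sigma+it)\ll (\abs{t}+1)^{\frac{8}{7}(1-\sigma)+\epsilon}. $$
In both cases the only thing to check is that the untwisted input has the admissible shape $(\abs{t}+1)^{\alpha(1-\sigma)+\epsilon}$ required by the hypothesis of Lemma \ref{l4}, which it manifestly does.

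The third bound is where the main obstacle lies, and I would treat it separately. The estimate of Lemma \ref{l3} valid for $j\ge 3$ is uniform in $\sigma$: it is the convexity-type bound $(\abs{t}+1)^{\frac{j+1}{2}+\epsilon}$ carrying no factor $(1-\sigma)$, and hence it is \emph{not} of the form $(\abs{t}+1)^{\alpha(1-\sigma)+\epsilon}$ demanded by Lemma \ref{l4}. Indeed no single $\alpha$ can reproduce it throughout $\tfrac12\le\sigma\le 1$, since at $\sigma=1$ the exponent $\alpha(1-\sigma)$ vanishes while $\tfrac{j+1}{2}$ does not, so Lemma \ref{l4} cannot be applied verbatim. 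Instead I would argue directly from the second identification above: $L_{sym^j f \otimes \chi}(s)$ is itself a degree-$(j+1)$ general $L$-function of exactly the same type as $L_{sym^j f}(s)$ and with the same $t$-aspect conductor, so the reasoning behind Lemma \ref{l3} (Perelli's framework) applies to it word for word and yields the stated uniform bound. As a cross-check, one may instead re-run the transfer used in the proof of Lemma \ref{l4}: dominating the partial sums of $L_{sym^j f \otimes \chi}(\tfrac12+it)$ by those of $L_{sym^j f}(\tfrac12+it)$ bounds the twist on the central line by $(\abs{t}+1)^{\frac{j+1}{2}+\epsilon}$, after which Phragm\'en--Lindel\"of between $\sigma=\tfrac12$ and $\sigma=1$ produces the sharper bound $(\abs{t}+1)^{(j+1)(1-\sigma)+\epsilon}$, which on $\tfrac12\le\sigma\le 1$ is dominated by $(\abs{t}+1)^{\frac{j+1}{2}+\epsilon}$. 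The single delicate point running through all three cases is to confirm that twisting by the fixed character $\chi$ changes neither the degree nor the $t$-aspect conductor, so that every exponent transfers unchanged.
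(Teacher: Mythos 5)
Your proposal is correct, and for the first two estimates it coincides with the paper's (implicit) argument: the paper offers no separate proof of Lemma \ref{l5}, presenting it as an immediate consequence of the preceding lemmas, i.e.\ feed $\alpha=\tfrac{13}{42}$ from Lemma \ref{l1} into Lemma \ref{l4} with $j=0$ (using $L_{sym^0 f}=\zeta$, so the twist is $L_{\chi}$), and feed $\alpha=\tfrac{8}{7}$ from Lemma \ref{l2} into Lemma \ref{l4} with $j=2$. Where you genuinely depart from the paper is the third estimate, and your departure is warranted: the paper tacitly treats it the same way, but as you correctly observe, the bound of Lemma \ref{l3} is $(\,\abs{t}+1)^{\frac{j+1}{2}+\epsilon}$ with no $(1-\sigma)$ factor, so it is not of the shape $(\,\abs{t}+1)^{\alpha(1-\sigma)+\epsilon}$ demanded by the hypothesis of Lemma \ref{l4}, and no choice of $\alpha$ makes it so on the whole strip. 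Your fix --- noting that $L_{sym^j f\otimes\chi}(s)$ is itself an entire, degree-$(j+1)$ $L$-function with a Riemann-type functional equation (by Newton--Thorne automorphy plus twisting by the fixed primitive character $\chi$), hence lies in Perelli's class and inherits the bound of Lemma \ref{l3} verbatim --- is the cleaner and logically complete way to obtain what the paper asserts; your Phragm\'en--Lindel\"of cross-check, giving exponent $(j+1)(1-\sigma)\le\tfrac{j+1}{2}$ on $\tfrac12\le\sigma\le 1$, is also sound. In short: same route as the paper for the first two bounds, and a more careful argument for the third that patches a small imprecision in the paper's one-line derivation.
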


\begin{lemma} \label{l6}
Let $\epsilon>0$ and $m \geq 1, j \geq 0$ be integers. Then we have the bound
$$ \lambda_{sym^j f}(m) r_2(m) \ll m^{\epsilon}. $$
\end{lemma}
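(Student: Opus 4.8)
The plan is to bound the two factors $\lambda_{sym^j f}(m)$ and $r_2(m)$ separately and then combine them with the classical divisor bound $d(m) \ll_\epsilon m^\epsilon$, where $d(m)$ denotes the number of divisors of $m$. For the theta factor this is immediate: since $r_2(m) = 4 r(m)$ with $r(m) = \sum_{d \mid m} \chi(d)$ and $\abs{\chi(d)} \leq 1$, I would write $\abs{r(m)} \leq \sum_{d \mid m} 1 = d(m)$, so that $r_2(m) \leq 4\, d(m)$.

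The substantive input is the Ramanujan bound, already recorded in the first definition: the Satake parameters satisfy $\abs{\alpha_f(p)} = \abs{\beta_f(p)} = 1$ (Deligne's theorem for holomorphic eigenforms). Reading off the Euler product of $L_{sym^j f}(s)$, the local coefficient $\lambda_{sym^j f}(p^\nu)$ is the coefficient of $p^{-\nu s}$ in $\prod_{l=0}^j \bigl(1 - \alpha_f(p)^{j-l}\beta_f(p)^l p^{-s}\bigr)^{-1}$, hence a sum of monomials of total degree $\nu$ in the $j+1$ quantities $\alpha_f(p)^{j-l}\beta_f(p)^l$ with $0 \leq l \leq j$. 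Each such quantity has modulus $1$, and the number of these monomials is $\binom{\nu+j}{j} \leq (\nu+1)^j$, so $\abs{\lambda_{sym^j f}(p^\nu)} \leq (\nu+1)^j$. Because the Dirichlet coefficients of an $L$-function with an Euler product are multiplicative, I would then take $\lambda_{sym^j f}(m) = \prod_{p^\nu \| m} \lambda_{sym^j f}(p^\nu)$ and deduce $\abs{\lambda_{sym^j f}(m)} \leq \prod_{p^\nu \| m}(\nu+1)^j = d(m)^j$. The case $j=0$ is trivial since $\lambda_{sym^0 f}(m) \equiv 1$.

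Combining the two estimates gives $\abs{\lambda_{sym^j f}(m)\, r_2(m)} \leq 4\, d(m)^{j+1}$. Finally I would invoke the divisor bound: for fixed $j$, applying $d(m) \ll_\epsilon m^{\epsilon/(j+1)}$ yields $4\, d(m)^{j+1} \ll_\epsilon m^\epsilon$, which is the asserted bound. There is no real obstacle here, as every ingredient is either standard or encoded in the definitions; the only step meriting care is the identification of $\lambda_{sym^j f}(p^\nu)$ with a bounded symmetric expression in the Satake parameters and the ensuing passage through multiplicativity, after which the divisor bound closes the estimate.
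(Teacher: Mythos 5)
Your proof is correct. Note that the paper gives no proof of this lemma at all---it is stated as a standard fact---and your argument supplies precisely the standard reasoning it implicitly relies on: Deligne's bound $\abs{\alpha_f(p)}=\abs{\beta_f(p)}=1$, the count of $\binom{\nu+j}{j}\leq(\nu+1)^j$ unit-modulus monomials in the expansion of the local Euler factor, multiplicativity of the coefficients, the trivial estimate $r_2(m)\leq 4d(m)$, and the divisor bound. As a minor remark, your intermediate step actually shows $\abs{\lambda_{sym^j f}(m)}\leq d_{j+1}(m)$, the $(j+1)$-dimensional divisor function, which is the form in which this bound is usually quoted in the literature.
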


\begin{lemma} \label{l7}
Let $j \geq 2$ be an integer. For $\epsilon>0$ and sufficiently large $x$, we get
$$ \sum_{m \leq x} \lambda_{sym^j f}(m) r_2(m) \ll x^{\frac{j}{j+2}+\epsilon} .$$
\end{lemma}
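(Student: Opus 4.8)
The plan is to extract the sum from a suitable Dirichlet series and to exploit the analytic continuation and growth of symmetric-power $L$-functions. Since $r_2 = 4r$, it suffices to estimate $\sum_{m \leq x} \lambda_{sym^j f}(m)\, r(m)$, where $r = \mathbf{1} * \chi$ in the sense of Dirichlet convolution, and to this end I would study
$$ D(s) = \sum_{m=1}^{\infty} \frac{\lambda_{sym^j f}(m)\, r(m)}{m^s}. $$
Both $\lambda_{sym^j f}$ and $r$ are multiplicative, so $D(s)$ has an Euler product; comparing its local factor at each prime $p$ with that of $L_{sym^j f}(s)\, L_{sym^j f \otimes \chi}(s)$ shows that the two agree to first order, their ratio being $1 + O(p^{-2s})$. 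Hence
$$ D(s) = L_{sym^j f}(s)\, L_{sym^j f \otimes \chi}(s)\, H(s), $$
where $H(s)$ is given by an Euler product converging absolutely, and bounded, in the region $\Re(s) \geq \tfrac12 + \epsilon$. Because $j \geq 2$, the automorphy results underlying Lemma \ref{l4} ensure that $L_{sym^j f}(s)$ and $L_{sym^j f \otimes \chi}(s)$ are entire, so $D(s)$ is holomorphic in $\Re(s) > \tfrac12$ with no pole at $s = 1$; thus the sum carries no main term and its size is governed entirely by cancellation.

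Next I would recover the partial sum by Perron's formula,
$$ \sum_{m \leq x} \lambda_{sym^j f}(m)\, r(m) = \frac{1}{2\pi i} \int_{c - iT}^{c + iT} D(s)\, \frac{x^s}{s}\, ds + (\text{error}), \qquad c = 1 + \epsilon, $$
with the truncation error controlled by the pointwise bound $\lambda_{sym^j f}(m)\, r_2(m) \ll m^{\epsilon}$ of Lemma \ref{l6}. Shifting the contour into the critical strip and using $H(\sigma + it) \ll 1$ together with the subconvexity estimates of Lemmas \ref{l2} and \ref{l3} for $L_{sym^j f}$ and the companion twisted bounds in Lemma \ref{l5}, I would bound the shifted vertical integral and the horizontal segments; the functional equations supplied by Lemma \ref{l4} are exactly what turn the $t$-aspect growth of the $L$-functions into a saving in $x$. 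Choosing the truncation height $T$ and the abscissa of the shifted line optimally then produces the target exponent $\tfrac{j}{j+2}$.

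The crux, and the main obstacle, is this sharp estimation of $D(s)$ in the critical strip: with no pole present, the final exponent is fixed solely by the order of growth of $L_{sym^j f}$ and $L_{sym^j f \otimes \chi}$, so reaching $x^{j/(j+2)+\epsilon}$ means using the functional equation in concert with subconvexity rather than convexity alone. The delicate regime is $j \geq 3$, where only the weaker bounds of Lemmas \ref{l3} and \ref{l4} are at hand; there the horizontal segments, on which the estimate must improve as $\Re(s) \to 1$, and the calibration of $T$ have to be managed carefully so as not to lose the exponent. A technically cleaner variant that I would keep in reserve is to insert a smooth weight, which isolates the vertical integral on $\Re(s) = \tfrac12 + \epsilon$ and removes the horizontal segments entirely; the smoothing is then undone at the cost of an error which is again admissible by Lemma \ref{l6}.
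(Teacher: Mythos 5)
The paper's own ``proof'' of this lemma is a one-line citation: the bound is deduced from Theorem 1 of Tang and Wu \cite{HtJw}, whose exponent $\frac{j}{j+2}=\frac{(j+1)-1}{(j+1)+1}$ is the Chandrasekharan--Narasimhan-type exponent attached to the degree-$(j+1)$ $L$-function $L_{sym^j f}$. Your attempt instead tries to prove the bound from scratch, and its analytic setup is correct as far as it goes: the factorization $D(s)=L_{sym^j f}(s)\,L_{sym^j f\otimes\chi}(s)\,H(s)$ with $H$ absolutely convergent for $\Re(s)\geq\frac12+\epsilon$ is right, and by Newton--Thorne $D(s)$ is indeed holomorphic in $\Re(s)>\frac12$, so there is no main term.

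The gap is quantitative, and it sits exactly at the step you defer: ``choosing the truncation height $T$ and the abscissa optimally then produces the target exponent.'' It does not. With the inputs actually available (Lemmas \ref{l2}, \ref{l3}, \ref{l5}, upgraded by Phragm\'en--Lindel\"of to bounds of the shape $(\abs{t}+1)^{\alpha(1-\sigma)+\epsilon}$), one has $D(\sigma+it)\ll(\abs{t}+1)^{2\alpha(1-\sigma)+\epsilon}$ with $\alpha=\frac{8}{7}$ for $j=2$ and $\alpha=j+1$ for $j\geq3$. The vertical integral on $\Re(s)=\frac12+\epsilon$ is then $\ll x^{\frac12+\epsilon}T^{\alpha}$, and balancing against the Perron error $x^{1+\epsilon}/T$ forces $T=x^{1/(2(\alpha+1))}$ and a final bound of $x^{1-\frac{1}{2(\alpha+1)}+\epsilon}$. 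For $j=2$ this is $x^{\frac{23}{30}+\epsilon}$, far weaker than the target $x^{\frac12+\epsilon}$; for $j\geq3$ it is $x^{\frac{2j+3}{2j+4}+\epsilon}$, weaker than the target $x^{\frac{2j}{2j+4}+\epsilon}$. Moreover this is not a bookkeeping defect but a structural one: $D(s)$ has degree $2(j+1)$, so even the full functional-equation (Chandrasekharan--Narasimhan/Voronoi) machinery applied to $D$ yields at best the exponent $\frac{2j+1}{2j+3}$, which still exceeds $\frac{j}{j+2}$ for every $j\geq2$. The exponent $\frac{j}{j+2}$ belongs to the degree-$(j+1)$ series $\sum_n \lambda_{sym^j f}(n)n^{-s}$, i.e.\ to the unweighted sum, which is precisely what Tang--Wu control; transferring it to the $r_2$-weighted sum cannot be done purely from growth estimates on $D(s)$ in the critical strip, which is all your argument uses. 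A telling consistency check is the paper's own Lemma \ref{l9}: it runs exactly your contour-shift scheme (for the related series $F(s)$) and extracts only a power saving of size roughly $1/j^2$ --- that is the true strength of this method here, and it is why the paper does not prove Lemma \ref{l7} this way. Your closing remark that one must use ``the functional equation in concert with subconvexity'' is where a proof would have to live, and none is supplied.
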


\begin{proof}
This result is a direct consequence of Tang and Wu \cite[Theorem 1]{HtJw}.
\end{proof}

In order to study the partial sum $\sum_{m \leq x} \lambda_{sym^j f}^2(m) r_2(m)$, we consider the Dirichlet series
$$ F(s) := \sum_{m=1}^{\infty} \frac{\lambda_{sym^j f}^2(m) r(m)}{m^s} ,$$
which is absolutely convergent on $\Re(s)>1$.

\begin{lemma} \label{l8}
When $\Re(s)>1$, we have the decomposition $F(s) = G(s) H(s)$ where
$$ G(s) := \prod_{i=0}^{j} L_{sym^{2i} f}(s) L_{sym^{2i} f \otimes \chi}(s)  $$
and $H(s)$ is absolutely convergent on $\Re(s) \geq \frac{1}{2}+\epsilon$ with $H(1) \neq 0$. 
\end{lemma}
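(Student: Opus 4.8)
The plan is to establish the factorization purely through Euler products. Since $\lambda_{sym^j f}$ is the coefficient sequence of an $L$-function with an Euler product and $r(m)=\sum_{d\mid m}\chi(d)$ is multiplicative, the sequence $\lambda_{sym^j f}^2(m)\,r(m)$ is multiplicative; hence on $\Re(s)>1$ the series $F(s)$ has an Euler product $F(s)=\prod_p F_p(s)$ with local factor $F_p(s)=\sum_{n\ge 0}\lambda_{sym^j f}^2(p^n)\,r(p^n)\,p^{-ns}$, and likewise $G(s)=\prod_p G_p(s)$, where $G_p(s)$ is the product of the local factors of the $2(j+1)$ $L$-functions appearing in $G$. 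Since $G(s)$ is a product of $L$-functions with no zeros on $\Re(s)>1$, I may set $H(s):=F(s)/G(s)=\prod_p H_p(s)$ with $H_p(s)=F_p(s)\,G_p(s)^{-1}$ there, and the lemma reduces to the local estimate $H_p(s)=1+O_j\!\left(p^{-2\sigma}\right)$, uniform in $p$, on $\Re(s)=\sigma\ge\tfrac12+\epsilon$.

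The crucial point is that $F_p$ and $G_p$ agree through the $p^{-s}$ term. Both have constant term $1$. For the $p^{-s}$ coefficient, the $F$-side gives $\lambda_{sym^j f}^2(p)\,r(p)=\lambda_{sym^j f}^2(p)\,(1+\chi(p))$, using $r(p)=\chi(1)+\chi(p)=1+\chi(p)$; collecting the $p^{-s}$ contributions of the $2(j+1)$ factors of $G_p$ gives $(1+\chi(p))\sum_{i=0}^{j}\lambda_{sym^{2i} f}(p)$. These agree exactly by the Hecke (Clebsch--Gordan) identity $\lambda_{sym^j f}(p)^2=\sum_{i=0}^{j}\lambda_{sym^{2i} f}(p)$, which is the $SU(2)$ fusion rule $U_j^2=\sum_{i=0}^{j}U_{2i}$ for the Chebyshev polynomials $\lambda_{sym^a f}(p)=\sum_{m=0}^{a}\alpha_f(p)^{a-m}\beta_f(p)^m$ (using $\alpha_f(p)\beta_f(p)=1$). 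Consequently the $p^{-s}$ terms cancel in $H_p(s)=F_p(s)G_p(s)^{-1}$, leaving $H_p(s)=1+O(p^{-2s})$; this is precisely why the even symmetric powers $sym^{2i} f$ and their $\chi$-twists, for $0\le i\le j$, are exactly the factors that must be extracted into $G$.

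To make the local bound uniform I would invoke Deligne's bound $|\alpha_f(p)|=|\beta_f(p)|=1$: each $\lambda_{sym^{2i} f}(p^n)$ is a sum of unit-modulus monomials, so $|\lambda_{sym^j f}^2(p^n)\,r(p^n)|\ll_j n^{C}$ for an absolute $C$, and the coefficients of $G_p(s)^{-1}$ (a finite product of geometric series $(1-\gamma/p^s)$ with $|\gamma|=1$) obey the same polynomial control; convolving, the $p^{-ns}$ coefficient of $H_p$ is $\ll_j n^{C'}$ for $n\ge 2$ and vanishes for $n=1$, whence $|H_p(s)-1|\ll_j\sum_{n\ge 2}n^{C'}p^{-n\sigma}\ll_{j,\epsilon}p^{-2\sigma}$ on $\sigma\ge\tfrac12+\epsilon$. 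Then $\sum_p|H_p(s)-1|\ll_{j,\epsilon}\sum_p p^{-1-2\epsilon}<\infty$, so $\prod_p H_p(s)$ converges absolutely on $\Re(s)\ge\tfrac12+\epsilon$. For nonvanishing at $s=1$, note that $r(p^n)\ge 0$ for every $n\ge 0$ with $r(1)=1$, and $\lambda_{sym^j f}^2(p^n)\ge 0$, so $F_p(1)\ge 1>0$, while $G_p(1)$ is a finite nonzero product of local Euler factors; hence $H_p(1)=F_p(1)/G_p(1)\ne 0$ for every $p$, and an absolutely convergent product of nonzero factors is nonzero, giving $H(1)\ne 0$. The main obstacle is the exact matching of the $p^{-s}$ coefficients via the symmetric-power multiplication identity; once that cancellation is secured, the convergence and nonvanishing are routine consequences of the Ramanujan bound.
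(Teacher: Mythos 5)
Your proof is correct and takes essentially the same route as the paper: both factor the Euler product of $F(s)$ against that of $G(s)$ using the Clebsch--Gordan identity $\lambda_{sym^j f}(p)^2=\sum_{i=0}^{j}\lambda_{sym^{2i} f}(p)$ so that the local factors of $H(s)$ begin at $p^{-2s}$, and then deduce absolute convergence on $\Re(s)\ge\frac{1}{2}+\epsilon$ from coefficient bounds. If anything, you are slightly more careful at the last step, since you check that each local factor satisfies $H_p(1)\neq 0$ before concluding $H(1)\neq 0$ from absolute convergence of the product, whereas the paper asserts the nonvanishing from convergence alone.
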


\begin{proof}
The Dirichlet series $F(s)$ has the following Euler product
$$ F(s) = \prod_p \left( 1+ \frac{\lambda_{sym^j f}^2(p)r(p)}{p^s} + \frac{\lambda_{sym^j f}^2(p^2) r(p^2)}{p^{2s}} + \cdots \right). $$

The coefficient of $p^{-s}$ can be written as
\begin{align*}
\lambda_{sym^j f}^2(p) r(p) &= \left( \sum_{i=0}^j \lambda_{sym^{2i} f}(p) \right) \left(1+\chi(p) \right) \\
&= \sum_{i=0}^j \left( \lambda_{sym^{2i} f}(p) + \lambda_{sym^{2i} f}(p) \chi(p) \right).
\end{align*}

This gives us that $F(s) = G(s) H(s)$ where 
$$ G(s) = \prod_{i=0}^{j} L_{sym^{2i} f}(s) L_{sym^{2i} f \otimes \chi}(s). $$
We can write the Euler product of $G(s)$ as
$$ G(s) = \prod_p \left( 1+ \frac{g(p)}{p^s} + \frac{g(p^2)}{p^{2s}} + \cdots \right) $$
where 
$$ g(p) = \sum_{i=0}^j \left( \lambda_{sym^{2i} f}(p) + \lambda_{sym^{2i} f}(p) \chi(p) \right) \ll p^{\epsilon}.$$ 

The Dirichlet series $H(s)$ will have the Euler product
$$ H(s) = \prod_p \left( 1+\frac{h(p^2)}{p^{2s}} + \frac{h(p^3)}{p^{3s}} + \cdots \right) $$
where 
$$ h(p^2) = \lambda_{sym^j f}^2(p^2) r(p^2) - g(p^2) \ll p^{\epsilon}. $$

The infinite product $H(s)$ converges absolutely when
$$ \sum_{p} \frac{h(p^2)}{p^{2s}} $$
converges absolutely i.e., when $\Re(s) \geq \frac{1}{2}+\epsilon$. Convergence of the product $H(s)$ implies that $H(1) \neq 0$.
\end{proof}

\begin{lemma} \label{l9}
For an integer $j \geq 2$ and sufficiently large $x$, we have
$$ \sum_{m \leq x} \lambda_{sym^j f}^2(m) r_2(m) = C_jx + O(x^{\gamma_j}) $$
where $C_j$ is a constant that depends on the form $f$ and $j$ and
$$ \gamma_j = \frac{21j^2+42j+19}{21j^2+42j+40}. $$
\end{lemma}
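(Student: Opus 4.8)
The plan is to isolate the main term from the pole of $F(s)$ at $s=1$ by Perron's formula and a contour shift, and to control the remaining integral by the subconvexity and convexity bounds for the factors of $G$. Since $r_2(m)=4r(m)$, it suffices to find the asymptotics of $\sum_{m\le x}\lambda_{sym^j f}^2(m)r(m)$, the summatory function of the coefficients of $F(s)=G(s)H(s)$ from Lemma \ref{l8}. I would begin with the truncated Perron formula at $c=1+\epsilon$ with a height $T$ to be optimised; since $\lambda_{sym^j f}^2(m)r(m)\ll m^\epsilon$ (using $\lambda_{sym^j f}(m)\ll m^\epsilon$ together with Lemma \ref{l6}), the truncation error is $O(x^{1+\epsilon}/T)$, giving
$$ \sum_{m\le x}\lambda_{sym^j f}^2(m)r(m) = \frac{1}{2\pi i}\int_{c-iT}^{c+iT}F(s)\frac{x^s}{s}\,ds + O\!\left(\frac{x^{1+\epsilon}}{T}\right). $$

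Next I would shift the line of integration to $\Re(s)=\tfrac12+\epsilon$. In $G(s)=\prod_{i=0}^{j}L_{sym^{2i}f}(s)L_{sym^{2i}f\otimes\chi}(s)$ the only factor with a pole in $\Re(s)>\tfrac12$ is $L_{sym^0 f}(s)=\zeta(s)$, which contributes a simple pole at $s=1$; every symmetric power $L$-function with $i\ge 1$ and every twist by the non-trivial character $\chi$ is entire by Newton--Thorne, while $H(s)$ is holomorphic and bounded for $\Re(s)\ge\tfrac12+\epsilon$ with $H(1)\neq0$ (Lemma \ref{l8}). Hence the shift crosses only the simple pole at $s=1$, whose residue produces the main term
$$ C_j x,\qquad C_j = 4\left(\prod_{i=1}^{j}L_{sym^{2i}f}(1)\right)\left(\prod_{i=0}^{j}L_{sym^{2i}f\otimes\chi}(1)\right)H(1), $$
a nonzero constant depending only on $f$ and $j$.

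To estimate the shifted vertical segment and the two horizontal segments I would assemble $t$-aspect bounds on $\tfrac12\le\sigma\le1$ for each factor. Lemmas \ref{l1} and \ref{l5} give $\zeta,L_\chi\ll(|t|+1)^{\frac{13}{42}(1-\sigma)+\epsilon}$; Lemmas \ref{l2} and \ref{l5} give $L_{sym^2 f},L_{sym^2 f\otimes\chi}\ll(|t|+1)^{\frac{8}{7}(1-\sigma)+\epsilon}$; and for $i\ge2$ the Newton--Thorne functional equation with the Phragm\'en--Lindel\"of principle gives the convexity bound $L_{sym^{2i}f},L_{sym^{2i}f\otimes\chi}\ll(|t|+1)^{\frac{2i+1}{2}(1-\sigma)+\epsilon}$. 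Taking the product over all $2(j+1)$ factors and using the boundedness of $H$, I would obtain $F(\sigma+it)\ll(|t|+1)^{\beta(1-\sigma)+\epsilon}$ with
$$ \beta = 2\left(\frac{13}{42}+\frac{8}{7}\right)+\sum_{i=2}^{j}(2i+1) = \frac{61}{21}+(j^2+2j-3) = j^2+2j-\frac{2}{21}. $$
On $\Re(s)=\tfrac12+\epsilon$ this yields a vertical contribution $\ll x^{1/2}T^{\beta/2+\epsilon}$, while the horizontal segments at height $T$ contribute $\ll x^{1+\epsilon}/T$, so the total error is $O(x^{1+\epsilon}/T + x^{1/2}T^{\beta/2+\epsilon})$.

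Balancing the two terms with $T=x^{1/(\beta+2)}$ makes each of order $x^{(\beta+1)/(\beta+2)+\epsilon}$, and since $\beta=\frac{21j^2+42j-2}{21}$ one checks directly that $\frac{\beta+1}{\beta+2}=\frac{21j^2+42j+19}{21j^2+42j+40}=\gamma_j$; absorbing $\epsilon$ gives the claimed error $O(x^{\gamma_j})$. The step I expect to be the main obstacle is pinning down the growth exponent $\beta$ exactly: one must use the sharp convexity bound (coefficient $\frac{2i+1}{2}$ per factor), which for the high symmetric powers refines the uniform estimate of Lemma \ref{l3}, and must retain both the untwisted and the $\chi$-twisted factor for every $i$. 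The residual bookkeeping---verifying that the horizontal integrals and the range $|t|\le1$ (where $x^s/s$ has no pole to the left of $s=1$ on the contour $\Re(s)=\tfrac12+\epsilon$) are of order at most $x^{\gamma_j}$---is then routine.
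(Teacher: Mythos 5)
Your proposal is correct and follows essentially the same route as the paper: truncated Perron's formula for $F=GH$, a contour shift to $\Re(s)=\tfrac12+\epsilon$ collecting the simple pole at $s=1$ (with the same constant $C_j$), the bound $F(\sigma+it)\ll(|t|+1)^{\left(j^2+2j-\frac{2}{21}\right)(1-\sigma)+\epsilon}$, and the balancing choice $T\sim x^{21/(21j^2+42j+40)}$ yielding $\gamma_j$. Your explicit use of the convexity bound with coefficient $\tfrac{2i+1}{2}(1-\sigma)$ for the factors with $i\ge 2$ is precisely what the paper's claimed bound on $F$ requires (Lemma \ref{l3} as literally stated, without the factor $(1-\sigma)$, would only give a weaker exponent), so your computation coincides with the one actually carried out in the paper.
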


\begin{proof}
By Perron's formula \cite[Exercise 4.4.15]{Mrmbook},
\begin{align*}
\sum_{m \leq x} \lambda_{sym^j f}^2(m) r_2(m) &= 4 \sum_{m \leq x} \lambda_{sym^j f}^2(m) r(m) \\
&= \frac{4}{2 \pi i} \int_{1+\epsilon-iT}^{1+\epsilon+iT} \frac{F(s)x^s}{s} ds + O \left(\frac{x^{1+\epsilon}}{T} \right).
\end{align*}

By Lemma \ref{l8}, $F(s) = G(s) H(s)$ where $G(s)$ has a simple pole at $s=1$ and $H(s)$ is absolutely convergent on $\Re(s) \geq  \frac{1}{2}+\epsilon$. By moving the line of integration to $\Re(s) = \frac{1}{2}+\epsilon$, we can collect the residue of $F(s)$ at $s=1$, which contributes the main term. This is
\begin{align*}
4 \, \text{Res}_{s=1} \frac{F(s)x^s}{s} &= 4 \lim_{s \to 1} \frac{(s-1) \prod_{i=0}^{j} \left( L_{sym^{2i} f}(s) L_{sym^{2i} f \otimes \chi}(s) \right) H(s) x^s}{s} \\
&= 4 L_{\chi}(1) \prod_{i=1}^{j} \left( L_{sym^{2i} f}(1) L_{sym^{2i} f \otimes \chi}(1) \right) H(1) x = C_j x.
\end{align*}

Therefore, we get
\begin{align*}
\sum_{m \leq x} \lambda_{sym^j f}^2(m) r_2(m) &= C_j x + \frac{2}{\pi i} \left[ \int_{1+\epsilon-iT}^{\frac{1}{2}+\epsilon -iT} + \int_{\frac{1}{2}+\epsilon-iT}^{\frac{1}{2}+\epsilon+iT} + \int_{\frac{1}{2}+\epsilon+iT}^{1+\epsilon+iT} \right] \frac{F(s) x^s}{s} ds \\
&\quad+ O \left( \frac{x^{1+\epsilon}}{T} \right). 
\end{align*}

Substituting the bounds from Lemmas \ref{l1}, \ref{l2}, \ref{l3} and \ref{l5} in the decomposition from Lemma \ref{l8}, we obtain the bound
$$ F(\sigma+it) \ll (\, \abs{t}+1)^{\left( j^2+2j-\frac{2}{21} \right)(1-\sigma)+\epsilon}$$
which holds uniformly in $\frac{1}{2} \leq \sigma \leq 1$ and $\, \abs{t} \geq 1$.

It is easy to see that the contribution from horizontal lines is bounded by
$$ \frac{x^{1+\epsilon}}{T} + x^{\frac{1}{2}+\epsilon} T^{\frac{j^2}{2}+j - \frac{22}{21}}. $$

The vertical line integral is split into two integrals over $\, \abs{t} \leq 1$ and $1<\, \abs{t} \leq T$. Over the integral $\, \abs{t} \leq 1$, the integral is bounded by $x^{\frac{1}{2}+\epsilon}$ and over $1< \, \abs{t} \leq T$, the integral is bounded by 
$$x^{\frac{1}{2}+\epsilon} T^{\frac{j^2}{2}+j-\frac{1}{21}}.$$

Combining all the error terms, we obtain
$$ \sum_{m \leq x} \lambda_{sym^j f}^2(m) r_2(m) = C_j x + O\left( x^{\frac{1}{2}+\epsilon} T^{\frac{j^2}{2}+j-\frac{1}{21}} \right) + O \left( \frac{x^{1+\epsilon}}{T} \right). $$
We choose $T$ such that $ x^{\frac{1}{2}+\epsilon} T^{\frac{j^2}{2}+j-\frac{1}{21}} \sim \frac{x^{1+\epsilon}}{T} $ i.e., $T \sim x^{\frac{21}{21j^2+42j+40}}$. Substituting this value of $T$, we get our result. 
\end{proof}

\section{Proof of Theorem \ref{t1}}

We will apply a weighted version of the axiomatization for sign changes provided by Meher and Murty \cite[Theorem 1.1]{JmMrm1}. Their result tells us that if we have
\begin{align*}
\lambda_{sym^j f}(m) r_2(m) &\ll m^{\alpha_j}, \\
\sum_{m \leq x} \lambda_{sym^j f}(m) r_2(m) &\ll x^{\beta_j}, \\
\sum_{m \leq x} \lambda_{sym^j f}^2(m) r_2(m) &= Cx + O(x^{\gamma_j}), 
\end{align*}
then there are at least $x^{1-\delta_j}$ sign changes of the sequence $ \{ \lambda_{sym^j f}(m) r_2(m) \mid m \leq x \} $ where
$$ \max \{ \alpha_j + \beta_j , \gamma_j \} < \delta_j <1.$$

From the lemmas \ref{l6}, \ref{l7} and \ref{l9}, we obtain
\begin{align*}
\alpha_j &= \epsilon ,\\
\beta_j &= \frac{j}{j+2} + \epsilon ,\\
\gamma_j &= \frac{21j^2+42j+19}{21j^2+42j+40},
\end{align*}
which completes the proof of Theorem \ref{t1}.

\section*{Acknowledgements}
The author is thankful to the Department of Atomic Energy, Government of India, for providing financial support for this work and the Harish-Chandra Research Institute, a CI of Homi Bhabha National Institute, for providing the necessary research facilities.

\bibliographystyle{elsarticle-num} 
\bibliography{Ref}

\end{document}